\newtheorem{lemma1}{}
\newenvironment{theorem}{\begin{lemma1}{\bf Theorem.}}{\end{lemma1}}
\newenvironment{proposition}{\begin{lemma1}{\bf Proposition.}}{\end{lemma1}}
\newenvironment{conjecture}{\begin {lemma1}{\bf Conjecture.}}{\end{lemma1}}
\newenvironment{the local obstruction - setup}{\begin{lemma1}{\bf The local obstruction - setup.}}{\end{lemma1}}
\newenvironment{remark*}{{\bf Remark.}}{}
\newenvironment{remarks*}{{\bf Remarks.}}{}
\newenvironment{example*}{{\bf Example.}}{}
\newenvironment{assumption*}{{\bf Assumption.}}{}
\newcommand{\Q}{\ensuremath{\mathbb{Q}}}
\newcommand\sF{{\mathcal F}}
\newcommand\sG{{\mathcal G}}
\newcommand\sL{{\mathcal L}}
\newcommand\sO{{\mathcal O}}
\title{Erratum and addendum to the paper: Abundance for K\"ahler threefolds} 
\date{April 20, 2023}
\author{Fr\'ed\'eric Campana}
\author{Andreas H\"oring}
\author{Thomas Peternell}
\address{Fr\'ed\'eric Campana, Institut Elie Cartan,
Universit\'e Henri Poincar\'e,
BP 239,
F-54506. Vandoeuvre-les-Nancy C\'edex,
et: Institut Universitaire de France\\
}
\email{frederic.campana@univ-lorraine.fr}
\address{Andreas H\"oring, Laboratoire de Math{\'e}matiques J.A. Dieudonn{\'e},
UMR 7351 CNRS, Universit{\'e} de Nice Sophia-Antipolis, 06108 Nice Cedex 02, France        
}
\email{hoering@unice.fr}
\address{Thomas Peternell, Mathematisches Institut, Universit\"at Bayreuth, 95440 Bayreuth, 
Germany}
\email{thomas.peternell@uni-bayreuth.de}
\begin{document}

\begin{abstract} 
In this text we signal a serious gap in the proof of the main theorem of our paper and explain which parts of the statement remain valid.
In fact, the main theorem remains valid unless possibly the variety  does not admit positive-dimensional subvarieties through a very general point and 
is not bimeromorphic to a quotient of a torus. This latter case would be ruled out by a Chern class inequality which holds in the algebraic case but is still unknown 
in the K\"ahler setting. 
\end{abstract}

\maketitle


The proof of Theorem 8.2 in \cite{a26} contains a gap. We explain here how to fix this in most cases, the only remaining case being "simple" threefolds $X$ of Kodaira dimension $0$,  hence through a very general point of $X$ there is no proper positive-dimensional compact analytic subvariety. 
In this case it is expected that $X$ is bimeromorphic to a torus quotient. We explain further how to reduce Theorem 8.2 to an orbifold Chern class inequality of Bogomolov-Gieseker type which in the algebraic setting is well-known and proved by reducing to dimension $2$ by taking hyperplane sections.  For recent developments towards the inequality in the K\"ahler case, we refer to \cite{CGNPPW23}.

The issue is concerned with the proof of Equality (33).  To be precise, we are in the following situation. 
Let $X$ be a normal $\mathbb Q$-factorial compact K\"ahler threefold such that $(X,0)$ is klt. Further, $X$ carries a
divisior $D \in \vert mK_X \vert$ with the following properties.
\begin{enumerate}
\item Set $B := {\rm Supp} D$. Then $(X,B) $ is lc and $X \setminus B$ has terminal singularities.
\item $K_X + B$ is nef with $\nu(K_X + B) = 2$; further, $\kappa (X) = \kappa (K_X+B)$.
\item For all irreducible components $T \subset B$, we have $(K_X + B)_T \ne 0$.
\item $(K_X + B) \cdot K_X^2 \geq 0$.
\end{enumerate}
Then we claim that  $\kappa (X) \geq 1$. 

To prove this, we consider a terminal modification $\mu: X' \to X$, set $L : = m(K_X+B) $; $L' = \mu^*(L) $ 
 and things come down to prove Equality (33) in \cite{a26}, stating 
\begin{equation} 
\label{Eq1} L' \cdot (K_{X'}^2 + c_2(X')) \geq 0. 
\end{equation} 

To prove Equation (\ref{Eq1}), we claimed that we may assume that $X$ has canonical singularities in codimension two. This however is not true in general. 
Once Equation (\ref{Eq1}) is settled (which holds  when 
$X$ has canonical singularities), the proof given in \cite{a26} is complete.
Thus we have to treat the case that $X$ has (possibly) non-canonical singularities. In this case $X$ has quotient singularities in codimension two. 
Instead of taking a terminalization, we let $\mu: X' \to X$ be a desingularization. 
We proceed following the lines of \cite[chapter14]{Uta92}. As in \cite[Lemma 14.3.1]{Uta92}, things come down to prove 
\begin{equation} \label{Eq2} L \cdot (K_X^2 + \hat c_2(X)) \geq 0. \end{equation} 
Since
$$ L \cdot (K_X+B) \cdot B = 0,$$ this comes 
by \cite[Lemma 14.3.2]{Uta92} down to show
\begin{equation} \label{Eq3} L \cdot \hat c_2(\Omega_X(\log B)) \geq 0.\end{equation} 

To prove Equation \ref{Eq3}, we proceed as in \cite{a26} in an orbifold context. 
We refer to \cite{GK20}  for the notion of orbifold Chern classes in an analytic context. 
In particular, we 
may define the intersection $\omega \cdot \hat c_2(\sF)$ for a K\"ahler form $\omega $ on $X$ and a reflexive sheaf $\sF$ on $X$ which is a
Q-sheaf on $X \setminus S$:
the orbifold Chern class $\hat c_2(\sF) \in H^4(X \setminus S,\mathbb R)$ extends uniquely to a class $\gamma $ on $X$, and we simply define
$$ \omega \cdot \hat c_2(\sF) := [\omega] \cdot \gamma,$$
where $[\omega] \in H^2(X,\mathbb R)$ is the class of $\omega$.

In a first step, we need the following orbifold version of \cite[Theorem 7.1]{a26}.

\begin{conjecture} \label{conj1}
Let $X$ be a normal compact K\"ahler threefold with only isolated non-quotient singularities,
and let $\alpha$ be a K\"ahler class on $X$.  Let $\sF$ be an $\alpha-$stable non-zero torsion-free coherent $Q$-sheaf on $X$.
Then we have
$$
\alpha \cdot \hat c_2(\sF)  \geq \big(\frac{r-1}{2r}\big)  \alpha \cdot  c_1^2(\sF).
$$ 
\end{conjecture}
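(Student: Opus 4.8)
The plan is to prove this orbifold Bogomolov-Gieseker type inequality by reducing to the classical (Kähler) case established in \cite[Theorem 7.1]{a26}, the only essential new feature being the presence of the finite set $S$ of isolated non-quotient singularities and the passage from the manifold context to the $Q$-sheaf context on an orbifold. First I would work on the open set $X \setminus S$, which carries the structure of a complex orbifold (all singularities being quotient singularities there), so that $\sF$ is genuinely a $Q$-sheaf in the sense of \cite{GK20} and its orbifold Chern classes $\hat c_1(\sF)$, $\hat c_2(\sF)$ are well defined as classes in $H^*(X \setminus S, \mathbb R)$. The key input is that these classes extend uniquely across the isolated points $S$ to classes on all of $X$, exactly as recalled in the excerpt above; this is what makes the intersection numbers $\alpha \cdot \hat c_2(\sF)$ and $\alpha \cdot c_1^2(\sF)$ meaningful. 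Because $S$ is finite, it has real codimension $6$, and so removing it does not affect the relevant cohomology in the degrees we intersect against a Kähler class; the Hermite-Einstein / Kähler-Yang-Mills machinery can therefore be run on the orbifold $X \setminus S$ without being obstructed by $S$.

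Next I would set up the analytic heart of the argument in parallel with the proof of \cite[Theorem 7.1]{a26}. Since $\sF$ is $\alpha$-stable, the orbifold version of the Donaldson-Uhlenbeck-Yau correspondence (in the Kähler, non-projective setting, which is what \cite[Theorem 7.1]{a26} relies on) provides an $\alpha$-Hermite-Einstein metric $h$ on the locally free part of $\sF$ on the orbifold locus. The pointwise Bogomolov-Gieseker inequality for the curvature tensor of a Hermite-Einstein connection then gives, after integration against the Kähler form, precisely
\begin{equation*}
\alpha \cdot \hat c_2(\sF) - \Big(\frac{r-1}{2r}\Big)\, \alpha \cdot c_1^2(\sF) \;=\; \frac{1}{(\text{positive const})} \int_{X \setminus S} \big| F_h^\circ \big|^2 \, \omega \wedge \omega \;\geq\; 0,
\end{equation*}
where $F_h^\circ$ is the trace-free part of the curvature and the integrand is manifestly nonnegative. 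The orbifold structure is exactly what allows one to write the Chern classes as curvature integrals via Chern-Weil theory on $X \setminus S$, and stability guarantees the existence of the metric realizing the inequality.

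The main obstacle I anticipate is controlling the analysis near the isolated non-quotient singularities $S$, where $\sF$ is only reflexive and has no orbifold structure. Two points need care: first, one must show that the Hermite-Einstein metric constructed on $X \setminus S$ (or rather on the orbifold part) has curvature whose $L^2$-norm is finite and whose Chern-Weil integrals compute the extended classes $\gamma$ on $X$ rather than just the classes on $X \setminus S$; this is where the uniqueness of the extension discussed in the excerpt must be matched with an actual integrability/removable-singularity statement, in the spirit of the theory developed in \cite{GK20} and the stability framework of \cite{a26}. Second, since $S$ has real codimension $6$ and we only pair against $\alpha$ (degree $2$) and against degree-$4$ Chern classes, a removable singularity argument à la Bando-Siu should let the inequality on $X \setminus S$ descend to the asserted inequality on $X$. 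I would therefore expect the bulk of the work to lie in verifying these boundary/integrability estimates at $S$, whereas the algebraic inequality itself follows formally once the Hermite-Einstein metric and the Chern-Weil identity are in place.
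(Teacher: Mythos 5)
This statement is labeled a \emph{Conjecture} in the paper, and deliberately so: the paper offers no proof of it, and indeed the entire erratum exists because this inequality ``holds in the algebraic case but is still unknown in the K\"ahler setting.'' Your proposal is therefore not filling in a routine argument; it is claiming to resolve the open problem, and the places where you defer work are exactly where the problem is open. Concretely, your plan rests on two ingredients that are not available in the literature at the required generality: (i) a Donaldson--Uhlenbeck--Yau correspondence producing an admissible Hermite--Einstein metric for an $\alpha$-stable torsion-free $Q$-sheaf on a compact K\"ahler \emph{space} with quotient singularities in codimension two and isolated worse singularities, and (ii) the integrability and removable-singularity statements ensuring that the Chern--Weil integrals of that metric compute the extended classes $\gamma$ on $X$ (not just classes on $X \setminus S$), so that the pointwise inequality integrates to the asserted intersection-theoretic one. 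The Bando--Siu theory you invoke is developed for holes in a smooth ambient K\"ahler manifold (or for sheaves singular along an analytic set inside a smooth space); near a point of $S$ the ambient space itself is a non-quotient singularity, so there is no smooth background metric and no standard removable-singularity theorem to quote. Saying these estimates ``should'' follow is precisely the content of the conjecture; the paper's reference \cite{CGNPPW23} (``Hermite--Einstein metrics in singular settings'') is cited exactly as work in progress toward this.

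It is also worth noting why the problem is genuinely harder in the K\"ahler case: in the projective setting the inequality is proved (see \cite[chap.10]{Uta92} and \cite[Thm.6.1]{GKP19c}) by restriction to hyperplane sections, reducing to the surface/orbifold-surface case via Mehta--Ramanathan type theorems. That route is closed for a non-algebraic K\"ahler threefold, which is why one is forced into the singular analytic machinery whose foundations are not yet established. Your outline is a reasonable description of how one \emph{hopes} the proof will eventually go, but as it stands it contains no argument for the two steps above, and those steps are the theorem.
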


Of course, Conjecture \ref{conj1} should be true in all dimensions under the assumption that $X$ has only quotient singularities in codimension $2$. 
If $X$ is projective, see \cite[chap.10]{Uta92} for the surface case and \cite[Thm.6.1]{GKP19c}

If Conjecture \ref{conj1} holds for klt spaces,  it yields  the following orbifold version of
\cite[Theorem 7.2]{a26}.

\begin{theorem} \label{thm2} 
Let $(X, \omega)$ be a compact K\"ahler threefold with isolated non-quotient singularities, and let $\sF$ be a non-zero reflexive coherent $Q$-sheaf
on $X$ such that $\det \sF$ is $\Q$-Cartier. Suppose that there exists a pseudoeffective class $P \in N^1(X)$ such that
$$
L := c_1(\sF) + P
$$
is a nef class. Suppose furthermore that for all $0 < \varepsilon \ll 1$ the sheaf $\sF$ is $(L+\varepsilon \omega)$-generically nef. Then we have
$$
L \cdot \hat c_2(\sF) \geq \frac{1}{2} (L \cdot c_1^2(\sF) - L^3).
$$
In particular, if $L \cdot c_1^2(\sF) \geq 0$ and $L^3=0$, then 
\begin{equation} \label{nonnegative}
L \cdot \hat c_2(\sF) \geq 0.
\end{equation}
\end{theorem}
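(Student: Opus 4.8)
The plan is to run the standard Harder--Narasimhan/Bogomolov machine in the Kähler orbifold setting, using Conjecture \ref{conj1} as the only nontrivial input, and to reduce the assertion to an elementary numerical inequality that is closed off by the Hodge index theorem for the nef class $L$. Since $L$ is nef and $\omega$ is Kähler, each class $L_\varepsilon := L + \varepsilon \omega$ with $0 < \varepsilon \ll 1$ is Kähler, so I may consider the Harder--Narasimhan filtration $0 = \sF_0 \subset \sF_1 \subset \cdots \subset \sF_k = \sF$ with respect to $L_\varepsilon$, with $L_\varepsilon$-semistable quotients $\sG_i := \sF_i/\sF_{i-1}$. Write $r_i := \rk \sG_i$ and $\gamma_i := c_1(\sG_i)$, so that $\sum_i r_i = \rk\sF$ and $\sum_i \gamma_i = c_1(\sF)$. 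Applying the semistable version of Conjecture \ref{conj1} to each $\sG_i$ with the Kähler class $L_\varepsilon$ and letting $\varepsilon \to 0$ yields $L \cdot \hat c_2(\sG_i) \geq \tfrac{r_i-1}{2r_i}\, L \cdot \gamma_i^2$ for every $i$. Combining these with the Whitney-type additivity
\[
L \cdot \hat c_2(\sF) = \sum_i L \cdot \hat c_2(\sG_i) + \sum_{i<j} L \cdot \gamma_i \cdot \gamma_j
\]
and the expansion $L \cdot c_1^2(\sF) = \sum_i L\cdot\gamma_i^2 + 2\sum_{i<j} L\cdot\gamma_i\cdot\gamma_j$, a short computation reduces the desired inequality $L\cdot\hat c_2(\sF) \geq \tfrac12\big(L\cdot c_1^2(\sF) - L^3\big)$ to the purely numerical claim $\sum_i \tfrac{1}{r_i}\, L\cdot\gamma_i^2 \leq L^3$.

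To establish this last inequality I would exploit the two positivity hypotheses. First, $L_\varepsilon$-generic nefness forces every Harder--Narasimhan slope to be nonnegative, so $L_\varepsilon^2\cdot\gamma_i \geq 0$ and hence $a_i := L^2\cdot\gamma_i \geq 0$ in the limit. Second, since $c_1(\sF) = L - P$ with $L$ nef and $P$ pseudoeffective, we have $L^2\cdot P \geq 0$, whence $\sum_i a_i = L^2\cdot c_1(\sF) = L^3 - L^2\cdot P$ satisfies $0 \leq \sum_i a_i \leq L^3$. Assuming first $L^3 > 0$, the Hodge index inequality $(L^2\cdot\gamma_i)^2 \geq (L^3)(L\cdot\gamma_i^2)$ for the nef class $L$ on a threefold gives $L\cdot\gamma_i^2 \leq a_i^2/L^3$, so that
\[
\sum_i \frac{1}{r_i}\,L\cdot\gamma_i^2 \;\leq\; \frac{1}{L^3}\sum_i \frac{a_i^2}{r_i} \;\leq\; \frac{1}{L^3}\Big(\sum_i a_i\Big)^2 \;\leq\; L^3,
\]
using $r_i \geq 1$ and $a_i \geq 0$ in the middle step and $0 \leq \sum_i a_i \leq L^3$ at the end. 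When $L^3 = 0$ the bound $\sum_i a_i \leq L^3 = 0$ together with $a_i \geq 0$ forces $L^2\cdot\gamma_i = 0$ for every $i$; restricting to a general surface $S \in |mL_\varepsilon|$ and applying the Hodge index theorem on $S$ (where $L_\varepsilon|_S$ is nef and $\gamma_i|_S$ is orthogonal to it in the limit) yields $L\cdot\gamma_i^2 \leq 0$, so the numerical claim holds trivially. The final assertion is then immediate: if $L\cdot c_1^2(\sF) \geq 0$ and $L^3 = 0$, the main inequality reads $L\cdot\hat c_2(\sF) \geq \tfrac12 L\cdot c_1^2(\sF) \geq 0$.

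The main obstacle is not the numerology above but the control of the orbifold Chern-class formalism along the filtration. Concretely, I expect the delicate points to be: (i) ensuring that Conjecture \ref{conj1}, stated for $\alpha$-stable reflexive $Q$-sheaves, applies to the Harder--Narasimhan subquotients $\sG_i$, which are only torsion-free and semistable, requiring the routine but nontrivial passage from stable to semistable and a check that the $Q$-structure is inherited in codimension two; (ii) the Whitney additivity of the orbifold class $\hat c_2$ across the filtration, which must be justified on $X \setminus S$, where $S$ is the finite set of non-quotient singularities, and then shown to be unaffected by $S$ for degree reasons once paired with $[\omega]$, since $S$ contributes only in $H^6$; and (iii) the $\varepsilon$-dependence of the filtration. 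For (iii) I would use that only finitely many numerical walls occur, so that along a suitable sequence $\varepsilon \to 0$ the filtration is constant and equal to the $L$-relative Harder--Narasimhan filtration, which legitimizes the limits taken above.
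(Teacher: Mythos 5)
Your overall strategy --- Harder--Narasimhan filtration with respect to $\omega_\varepsilon := L + \varepsilon\omega$, Conjecture \ref{conj1} applied to the semistable graded pieces, Whitney additivity of $\hat c_2$, and a Hodge-index bound reducing everything to $\sum_i \frac{1}{r_i}\, L\cdot\gamma_i^2 \leq L^3$ --- is exactly the intended one: the erratum gives no separate proof of Theorem \ref{thm2}, which is the orbifold transcription of \cite[Theorem 7.2]{a26}, and that proof is precisely this argument. Your treatment of the case $L^3 > 0$ is correct. The case $L^3 = 0$, however, contains a genuine gap --- and note that this is exactly the case needed for the application (proving Equation (3), where $\nu = 2$ forces $L^3 = 0$). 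First, the auxiliary surface $S \in |mL_\varepsilon|$ does not exist: $L_\varepsilon$ is a transcendental K\"ahler class, not a rational divisor class, and on the non-projective K\"ahler threefolds for which the theorem is actually needed no multiple of it is represented by a divisor. Second, and more seriously, the numerical claim you are trying to extract --- that $L^3 = 0$ and $L^2\cdot\gamma_i = 0$ force $L\cdot\gamma_i^2 \leq 0$ --- is false for a general nef class $L$ and a general class $\gamma_i$: take $L = f^*A$ the pullback of an ample class under a fibration $f \colon X \to C$ onto a curve, and $\gamma_i$ a K\"ahler class; then $L^2 = 0$, hence $L^3 = 0$ and $L^2\cdot\gamma_i = 0$, while $L\cdot\gamma_i^2 > 0$. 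The limiting information $a_i = 0$ that you retain from generic nefness is simply too weak; the constraints coming from the filtration must be used at every $\varepsilon > 0$, not only in the limit $\varepsilon \to 0$.

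The repair is to avoid any case distinction and never to apply the Hodge index inequality with the possibly degenerate class $L$: work at fixed $\varepsilon$ with the K\"ahler class $\omega_\varepsilon$ throughout. Hodge index for $\omega_\varepsilon$ gives $\omega_\varepsilon\cdot\gamma_i^2 \leq (\omega_\varepsilon^2\cdot\gamma_i)^2/\omega_\varepsilon^3$, and since $\omega_\varepsilon^2\cdot\gamma_i \geq 0$ for all $i$ (generic nefness) while $\sum_i \omega_\varepsilon^2\cdot\gamma_i = \omega_\varepsilon^2\cdot c_1(\sF) \leq \omega_\varepsilon^2\cdot L$ (pseudoeffectivity of $P$), one obtains
\begin{equation*}
\sum_i \frac{1}{r_i}\,\omega_\varepsilon\cdot\gamma_i^2 \;\leq\; \frac{(\omega_\varepsilon^2\cdot L)^2}{\omega_\varepsilon^3},
\end{equation*}
a bound that no longer refers to the filtration. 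Expanding $\omega_\varepsilon^2\cdot L = L^3 + 2\varepsilon L^2\cdot\omega + \varepsilon^2 L\cdot\omega^2$ and $\omega_\varepsilon^3 = L^3 + 3\varepsilon L^2\cdot\omega + 3\varepsilon^2 L\cdot\omega^2 + \varepsilon^3\omega^3$, one checks that the right-hand side tends to $L^3$ as $\varepsilon \to 0$ in all cases: when $L^3 = 0$ the numerator is of strictly higher order in $\varepsilon$ than the denominator (it is $O(\varepsilon^2)$ against a denominator of order $\varepsilon$ if $L^2\cdot\omega > 0$, and $O(\varepsilon^4)$ against a denominator of order $\varepsilon^2$ if $L^2\cdot\omega = 0 < L\cdot\omega^2$), so the ratio tends to $0 = L^3$. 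Feeding this into your Whitney computation and letting $\varepsilon \to 0$ yields the theorem at once. As a bonus, since the bound is independent of the individual $\gamma_i$, your point (iii) --- stabilization of the Harder--Narasimhan filtration along a sequence $\varepsilon \to 0$, which in the K\"ahler setting is itself not a routine matter --- becomes unnecessary: one passes to the limit in an inequality whose two sides are defined without reference to the filtration.
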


We aim to apply Theorem \ref{thm2} for the reflexive sheaf $\sF = \Omega_X(\log B)$ to prove Equation \ref{Eq3}. Therefore, we need to check generic nefness of $\sF$ which 
follows from the generic nefness of $\Omega_X$. 
This is done by the following adaption of \cite[Proposition 8.2]{a26}.

\begin{proposition} \label{prop1}
Let $X$ be a normal  non-algebraic compact K\"ahler space of dimension $n$ with klt singularities. 
Suppose that $\kappa(X) \geq 0$.
Then $\Omega_X$ is generically nef with respect to any nef class $\alpha$, i.e. for every torsion-free quotient sheaf
$$
\Omega_X \rightarrow \mathcal Q \rightarrow 0,
$$
we have $\alpha^{n-1} \cdot c_1(\mathcal Q) \geq 0$.
\end{proposition}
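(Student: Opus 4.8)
The plan is to argue by contradiction through the theory of foliations, adapting \cite[Proposition 8.2]{a26}. Since the nef cone is the closure of the K\"ahler cone and, for a fixed quotient $\mathcal Q$, the map $\alpha \mapsto \alpha^{n-1}\cdot c_1(\mathcal Q)$ is continuous, it suffices to prove the inequality when $\alpha$ is a K\"ahler class; the general case then follows by passing to the limit. So fix a K\"ahler class $\alpha$ and suppose, for contradiction, that some torsion-free quotient $\Omega_X \to \mathcal Q \to 0$ satisfies $\alpha^{n-1}\cdot c_1(\mathcal Q) < 0$. Replacing $\mathcal Q$ by the minimal-slope graded piece of the Harder--Narasimhan filtration of $\Omega_X$ with respect to $\alpha$, we may assume $\mu_\alpha(\mathcal Q) = \mu_{\alpha,\min}(\Omega_X) < 0$. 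Dualizing this filtration, let $\sF \subset T_X$ be the maximal destabilizing subsheaf of the tangent sheaf; it is $\alpha$-semistable with $\mu_\alpha(\sF) = \mu_{\alpha,\max}(T_X) = -\mu_{\alpha,\min}(\Omega_X) > 0$.

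Next I would show that $\sF$ is a foliation. The Lie bracket on $T_X$ descends to an $\mathcal O_X$-linear, alternating morphism $\wedge^2 \sF \to T_X/\sF$. Because we are in characteristic zero, $\alpha$-semistability of $\sF$ passes to $\wedge^2\sF$, so $\mu_{\alpha,\min}(\wedge^2\sF) = 2\,\mu_\alpha(\sF)$; on the other hand, maximality of $\sF$ gives $\mu_{\alpha,\max}(T_X/\sF) < \mu_\alpha(\sF)$. Since $\mu_\alpha(\sF) > 0$ we have $2\,\mu_\alpha(\sF) > \mu_\alpha(\sF) > \mu_{\alpha,\max}(T_X/\sF)$, and as any nonzero sheaf morphism $A \to B$ satisfies $\mu_{\alpha,\min}(A) \le \mu_{\alpha,\max}(B)$, the bracket morphism must vanish. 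Hence $\sF$ is integrable and defines a foliation with $c_1(\sF)\cdot\alpha^{n-1} > 0$.

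The heart of the argument is then the positivity dichotomy for foliations in the K\"ahler setting: a foliation $\sF$ whose slope against a K\"ahler class is positive, equivalently whose canonical class $K_\sF = -c_1(\sF)$ fails to be pseudoeffective, is algebraically integrable and the closure of its general leaf is rationally connected. This is the K\"ahler counterpart of the results of Bogomolov--McQuillan and Campana--P\u{a}un. Granting it, the rationally connected leaf closures sweep out $X$, so $X$ is uniruled. But a uniruled compact K\"ahler space has $\kappa(X) = -\infty$; equivalently $K_X$ is not pseudoeffective, by the K\"ahler analogue of the theorem of Boucksom--Demailly--P\u{a}un--Peternell available for threefolds. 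Since $\kappa(X) \ge 0$ forces $K_X$ to be pseudoeffective, this is the desired contradiction, and $\Omega_X$ is $\alpha$-generically nef.

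I expect the third step to be the main obstacle. In the projective case, algebraicity of the leaves of a positive-slope foliation is obtained by restricting to very general complete-intersection curves and invoking classical bend-and-break, neither of which is directly available on a non-algebraic K\"ahler space with klt singularities; this is precisely why the statement is phrased in the non-algebraic setting, the algebraic case being covered by \cite{a26} and its sources. The crux is therefore an analytic algebraicity criterion for $\sF$ valid on such spaces, together with the production of the rational curves needed for the K\"ahler BDPP-type conclusion. Once these analytic inputs are in place, as they are in dimension three, the slope computations above and the Harder--Narasimhan formalism for reflexive sheaves with respect to a K\"ahler class go through without change.
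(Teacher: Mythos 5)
Your strategy (Harder--Narasimhan filtration, integrability of the maximal destabilizing subsheaf of $T_X$, then algebraicity and rational connectedness of the leaves) is the Campana--P\u{a}un route to generic semipositivity, and you have correctly located the weak point yourself: step three is not an available input but an open problem, and this is precisely why neither \cite{a26} nor this erratum argues that way. Algebraic integrability of a foliation with positive slope, with rationally connected leaf closures, is known only in the projective (or Moishezon) category, because the proofs of Bogomolov--McQuillan and Campana--P\u{a}un rest on restriction to complete-intersection curves and bend-and-break via reduction modulo $p$. In the non-algebraic K\"ahler category there is no substitute: even the unfoliated BDPP statement (``$K_X$ not pseudoeffective implies uniruled'') is open for compact K\"ahler manifolds of dimension at least $4$, and in dimension $3$, where \cite{a21} does prove it, the rational curves are not produced tangent to a prescribed foliation, so the foliated version you need is not established there either. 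Your closing assertion that the required inputs ``are in place, as they are in dimension three'' is therefore unsubstantiated, and since the Proposition is stated in arbitrary dimension $n$, the argument as written assumes exactly the hard part of the statement.

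The paper's proof avoids foliations altogether: it passes to a desingularization $\pi \colon \hat X \to X$ and invokes Enoki's analytic generic-semipositivity theorem \cite[Theorem 1.4]{En88}, which yields the desired inequality for quotients of $\Omega_{\hat X}$ as soon as $K_{\hat X}$ is a sum of a nef and an effective $\mathbb{Q}$-divisor; taking the nef part to be zero, the entire content becomes showing $\kappa(\hat X) \geq 0$. This is where the klt hypothesis bites, since negative discrepancies prevent one from deducing $\kappa(K_{\hat X}) \geq 0$ formally from $\kappa(K_X) \geq 0$; the paper settles it by contradiction: if $\kappa(\hat X) = -\infty$, then $\hat X$ is uniruled by \cite{a21}, it is not rationally connected (being non-algebraic), its MRC quotient has two-dimensional base because $H^2(\hat X, \sO_{\hat X}) \neq 0$, and -- again by non-algebraicity -- the $\pi$-exceptional set cannot dominate the base, so the MRC fibration descends to an almost holomorphic fibration on $X$ whose general fiber is a smooth rational curve avoiding the exceptional locus, contradicting $\kappa(K_X) \geq 0$. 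Note finally that your last step contains the same klt subtlety in mirror image: from ``rationally connected leaf closures sweep out $X$'' you conclude $\kappa(X) = -\infty$, but on a klt space with non-canonical points uniruledness does not immediately contradict effectivity of $mK_X$ unless the covering rational curves can be taken to avoid the non-canonical locus -- which is exactly what the descent step in the paper's argument is designed to guarantee.
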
 

\begin{proof} 
We proceed as in \cite{a26}. Let $\pi: \hat X \to X$ be a desingularization. Using the same notations as in the proof of  \cite[Proposition 8.2]{a26},
we need to show that 
$$ \pi^*(\alpha) ^{n-1} \cdot c_1(\hat Q) \geq 0.$$ To do this, we apply Enoki's theorem \cite[Theorem 1.4]{En88}, and need to write as $\mathbb Q$-divisors, 
$$ K_{\hat X} = L + D$$
with $L$ nef and $D$ effective. 
Setting $L = 0$, we need to show that $\kappa (\hat X) \geq 0$. Since $X$ is klt, this is not automatic from $\kappa (X) = \kappa (K_X) \geq 0$. 
Assuming to the contrary that $\kappa (\hat X) = - \infty$. Then by \cite{a21}, $\hat X$ is uniruled. Since $\hat X$ is not algebraic, $\hat X$ cannot be rationally connected, hence we have 
a rational almost holomorphic (MRC) quotient 
$$ \hat f: \hat X \dasharrow \hat S $$
with $\dim \hat S = 1,2$. Since $H^2(\hat X,\sO_{\hat X}) \ne 0$, we must have $\dim \hat S = 2$. Since $\hat X$ is not algebraic, the exceptional set of $\pi$ cannot surject onto $\hat S$ and 
therefore $\hat f$ descends to an almost holomorphic map $f: X \dasharrow \hat S$ whose general fiber is a smooth rational curve. This contradicts $\kappa (K_X) \geq 0$. 

\end{proof}

\vskip .5cm 

\begin{proof}[Proof of Theorem 8.2 in the non-simple case.]

In the rest of the paper we point out how directly prove Theorem 8.2 in \cite{a26} - without using a Bogomolov-Gieseker inequality -  in case $X$ has algebraic dimension $a(X) \geq 1 $ or carries a family of curves covering $X$.  For the theory of algebraic reductions, which we will use freely, we refer e.g. to \cite{CP94}. 
We will write $\sL = \sO_X(L)$. 

{\it 1st case:}  We assume that there exists an almost holomorphic elliptic fibration $$f: X \dasharrow S,$$ over a smooth 
K\"ahler surface. Since $f$ cannot be a Moishezon morphism,  $\sL$ must be numerically trivial on the general fiber of $f$. 
We now consider the graph of the closure of the family of elliptic curves (fibers of $f$) in $X$ and obtain a birational map $p: Z \to X$ and an equidimensional morphism 
$q: Z \to T$, where $T$ is the parameter space of the family (birational to $S$). We may assume $T$ smooth and $Z$ normal. The line bundle $\tilde \sL = p^*(\sL)$ 
is $q$-numerically trivial, actually trivial on the general fiber of $q$ since $H^0(X;\sL) \ne 0$. Using Zariski's lemma \cite[III, Lemma 8.2]{BHPV04} we obtain that (up to replacing $L$ by a multiple) we have $\sL \simeq q^*(\sG)$ with a line bundle $\sG$ on $T$.
Since $\nu(\tilde \sL) = 2$, $\sG$ is big and nef. Thus $S$ is Moishezon and $\kappa (\tilde \sL) = 2$, hence $\kappa (\sL) = 2$. 

Note that this first case settles in particular the case where the algebraic dimension
$a(X)$ is equal to two: in this case the algebraic reduction defines an elliptic fibration as above.

\vskip .2cm \noindent 
{\it 2nd case: assume that $a(X) = 1$.}  Here the algebraic reduction is a holomorphic fiber space $$f: X \to C$$ over a smooth curve $C$. Let $X_c$ be a general fiber of $f$, a klt surface, and $\hat X_c \to X_c$ the minimal desingularization. 
Since $X$ is not uniruled and since $\kappa (\hat X_c) \leq 0$ for the general fiber of any algebraic reduction, we conclude $\kappa (\hat X_c) = 0$, hence $\kappa (X_c) \geq 0$. 
If the general fiber $X_c$ is algebraic, $f$ cannot have a generic multisection, hence $X_c$ must be smooth and minimal. Thus $B_{\vert X_c} = 0$ and $\sL \vert_{X_c} \simeq \sO_{X_c}$.
Since we may assume $ h^0(X, \sL^{\otimes k}) = 1
$ for all $k \geq 1$, we have $f_*(\sL) = \sO_C$ (anyway we must have $C = \mathbb P_1$ by $C_{3,1}$),
 and thus $\sL = \sO_X(\sum a_j D_j)$ with $a_j > 0$ and $D_j$ fiber components. Since $\sL$ is nef and numerically trivial on the general fibre, this implies $\nu(\sL) = 1$.
This contradicts our assumption $\nu(\sL) = 2$. 

If the general fiber $X_c$ is not algebraic, we have $L^2 \cdot X_c = 0$. In fact,
$L_{\vert X_c}$ cannot be big, otherwise $X$ would be projective. 
Then we see immediately that 
$$ K_{X_c}^2 = K_{X_c} \cdot B_{\vert X_c} = B_{\vert X_c}^2 = 0.$$
Further, $K_{X_c}$ must be nef, otherwise $X_c$ has a minimal model whose canonical divisor is big. By abundance for surfaces, $K_{X_c}$ is semiample.

If the Kodaira fibration defines an elliptic fibration on $X_c$ we obtain a covering family of elliptic curves on $X$ and conclude as in the previous case. 

If $K_{X_c}$ is torsion, the same holds for $\sL_{\vert X_c}$. 
Hence $f_*(\sL^k) = \sO_C$ for some $k$, and we conclude as before. 

\vskip .2cm \noindent 
{\it 3rd case: assume that $a(X) = 0$ and $X$ is not simple}. In this case  $X$ is Kummer, i.e., bimeromorphic to a quotient of a torus - and there is nothing to prove - or $X$ has a covering family of elliptic curves, see \cite{CP00}. 
We consider again - as in the first case  - the graph $p: Z \to X$. Since $a(X) = 0$, $X$ has only finitely many divisiors and therefore $p$ is generically finite. 
As before, we deduce  $\kappa(p^*(\sL)) = 2$, hence $\kappa (\sL) = 2$. 

\end{proof}


\providecommand{\bysame}{\leavevmode\hbox to3em{\hrulefill}\thinspace}
\providecommand{\MR}{\relax\ifhmode\unskip\space\fi MR }
\providecommand{\MRhref}[2]{%
  \href{http://www.ams.org/mathscinet-getitem?mr=#1}{#2}
}
\providecommand{\href}[2]{#2}

\end{document}